\theoremstyle{definition}
\newtheorem{theorem}{Theorem}[subsection]
\newtheorem{lemma}[theorem]{Lemma}
\newtheorem{corollary}[theorem]{Corollary}
\newtheorem{definition}[theorem]{Definition}
\def\cc{\mathbf C}\def\cd{\mathbf D}
\def\ce{\mathbf E}
\def\cJ{\mathcal J}
\def\co{\mathbf O}
\def\cq{\mathcal Q}
\newcommand{\calO}{{\mathcal O}}
\newcommand{\modoa}{\mathbf{Mod}^\co_A(\cc)}
\def\kmodoa{{\mathbf{Mod}^\co_{A}(\cJ(\cc)_n)}}
\begin{document}


\title{A variant of algebraic K-theory}
\author{Sanath K. Devalapurkar}
\maketitle
\begin{abstract}
In this paper, we study a modification, called $\cJ$-theory, of Barwick's definition of the algebraic K-theory of stable $\infty$-categories. We show that $\cJ$-theory takes values in stable $\infty$-categories. We also compute the $\cJ$-theory of an $\infty$-category of modules, and establish that it is an $\infty$-category of modules itself. Using this result, we prove an $\infty$-categorical counterpart of the derived Morita context for flat rings for $\cJ$-theory.
\end{abstract}

\section{Introduction and generalities.}
\subsection{Introduction.}
Blumberg, Gepner, and Tabuada studied the algebraic K-theory of spectra by defining the algebraic K-theory of stable $\infty$-categories, taking values in spectra. Barwick in \cite{barwick14} showed that there is an alternative definition of algebraic K-theory, sufficiently generalizing the ordinary algebraic K-theory of exact categories, which took values in complete Segal spaces.

In this paper, we prove that a slight variant of Barwick's construction (which we will also call `Barwick's construction') takes values in stable $\infty$-categories. Our goal in this paper is to make precise the following maxim: there is a variant of Barwick's Q-construction, called $\cJ$-theory, which is an inherently stable algebraic invariant of $\infty$-categories, that ''dimensionwise'' takes values in stable $\infty$-categories.
We will first show that the above maxim must be true in the context of stable $\infty$-categories. We then study the $\cJ$-theory of an $\infty$-category of module objects in a symmetric monoidal stable $\infty$-category. Using the results, we show that there is an analogue of the derived Morita theory of (flat) rings for $\cJ$-theory. 
$\co^\otimes$ is used to denote a coherent $\infty$-operad unless mentioned otherwise.
If $\co^\otimes$ is a coherent $\infty$-operad and $A$ is an $\co$-algebra object of $\cc^\otimes$, then $\modoa^\otimes$ is the $\infty$-operad of $\co$-modules over $A$ and $\modoa$ is the underlying $\infty$-category of $\modoa^\otimes$.
We assume the axiom of choice. A ``category'' is not always an $\infty$-category (however, we identify a category $\cc$ with the nerve $\mathrm{N}(\cc)$).
\section{Foundational aspects of $\cJ$-theory.}
Let $\widetilde{\calO}(\Delta^n)$ denote the twisted arrow $\infty$-category of $\Delta^n$. One can use $\widetilde{\calO}(\Delta^n)$ to extend the definition of an ambigressive pullback/pushout to that of a semi-ambigressive functor.
Let $\cc$ be an exact $\infty$-category. A functor $\widetilde{\calO}(\Delta^n)\to\cc$ (resp. $\widetilde{\calO}(\Delta^n)^\mathrm{op}\to\cc$) is said to be semi-ambigressive if it takes pushout and pullback squares in $\widetilde{\calO}(\Delta^n)$ (resp. $\widetilde{\calO}(\Delta^n)^\mathrm{op}$) to ambigressive pushouts and ambigressive pullbacks, respectively. We denote by $\overline{\mathrm{Fun}(\widetilde{\calO}(\Delta^n),\cc)}$ the subcategory of the $\infty$-category $\mathrm{Fun}(\widetilde{\calO}(\Delta^n),\cc)$ of functors from $\widetilde{\calO}(\Delta^n)$ to $\cc$ spanned by the semi-ambigressive functors. Let $\cJ(\cc)_n$ denote $\overline{\mathrm{Fun}(\widetilde{\calO}(\Delta^n)^\mathrm{op},\cc)}$.
Since the maxim in the introduction mentioned that $\cJ$-theory is stable in each dimension, we will be focusing on $\cJ(\cc)_n$ instead of the bisimplicial set $\cJ(\cc)_\bullet$. We begin by noting an obvious lemma.
\begin{lemma}\label{important}
Let $\cc$ be a stable $\infty$-category. Then $\cJ(\cc)_n$ and $\overline{\mathrm{Fun}(\widetilde{\calO}(\Delta^n),\cc)}$ are $\infty$-categories for all $n$.
\end{lemma}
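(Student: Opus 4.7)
The plan is to unwind the definitions and appeal to the standard fact that any full subcategory of an $\infty$-category is itself an $\infty$-category. First, recall that if $K$ is any simplicial set and $\cc$ is an $\infty$-category, then the simplicial mapping space $\mathrm{Fun}(K,\cc)$ is again an $\infty$-category (HTT 1.2.7.3). Applying this with $K = \widetilde{\calO}(\Delta^n)$ and $K = \widetilde{\calO}(\Delta^n)^{\mathrm{op}}$, together with the observation that a stable $\infty$-category is in particular an $\infty$-category (i.e., a quasicategory), we obtain that $\mathrm{Fun}(\widetilde{\calO}(\Delta^n),\cc)$ and $\mathrm{Fun}(\widetilde{\calO}(\Delta^n)^{\mathrm{op}},\cc)$ satisfy the inner horn-filling conditions.

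The second step is to observe that $\overline{\mathrm{Fun}(\widetilde{\calO}(\Delta^n),\cc)}$ and $\cJ(\cc)_n$ are carved out of these functor $\infty$-categories by a condition that depends only on the objects; indeed, ``semi-ambigressive'' is defined as a property of a single functor $\widetilde{\calO}(\Delta^n) \to \cc$ (or of its opposite), without imposing constraints on natural transformations between such functors. Consequently, the subcategories under consideration are genuinely \emph{full} subcategories of the respective functor $\infty$-categories. Since a full subcategory of a quasicategory automatically inherits the inner horn-filling property (any inner horn lands in the ambient $\infty$-category, and the resulting filler, being a simplex between objects in the full subcategory, lies in the full subcategory), we conclude.

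I do not expect a serious obstacle here. The stability hypothesis on $\cc$ is only used implicitly, to ensure that the notion of an ambigressive pullback/pushout square in $\cc$ is well-defined: stable $\infty$-categories carry a canonical exact structure in which every morphism is both ingressive and egressive and in which pushout squares coincide with pullback squares. Once we know the semi-ambigressive condition makes sense and is a condition on objects, the $\infty$-categorical content of the lemma reduces to ``full subcategory of an $\infty$-category is an $\infty$-category,'' which is the reason the author labels this an \emph{obvious} lemma.
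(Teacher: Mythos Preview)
Your argument is correct and, at bottom, rests on the same fact the paper uses: the semi-ambigressive condition singles out a \emph{full} subcategory of a functor $\infty$-category, and full subcategories of quasicategories are quasicategories. The only difference is presentational. The paper does not argue abstractly from fullness; instead it observes that, for a stable $\cc$ with its canonical exact structure, a semi-ambigressive functor $\widetilde{\calO}(\Delta^n)\to\cc$ is precisely a left exact (finite-limit-preserving) functor, and then implicitly invokes the standard fact that $\mathrm{Fun}^{\mathrm{lex}}$ is an $\infty$-category. Your route is slightly more direct for the lemma as stated, while the paper's identification with $\mathrm{Fun}^{\mathrm{finlim}}$ is what gets reused in the proof of the Stability Theorem immediately afterward.
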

\begin{proof}
We will prove the statement for $\overline{\mathrm{Fun}(\widetilde{\calO}(\Delta^n),\cc)}$; the statement for $\cJ(\cc)_n$ is entirely analogous because $\mathrm{Cat}^\mathrm{Ex}_\infty$ is closed under the op-involution. A semi-ambigressive functor $\widetilde{\calO}(\Delta^n)\to\cc$ is, for $\cc$ a stable $\infty$-category equipped with the canonical $t$-structure, a functor which preserves finite limits, i.e., a left exact functor $\widetilde{\calO}(\Delta^n)\to\cc$.
\end{proof}
This is more general than the ordinary Barwick-Quillen Q-construction, which uses \textit{ambigressive functors}, not semi-ambigressive ones. However, Lemma \ref{important} shows that using semi-ambigressive functors is advantageous in that it allows higher categorical objects to be taken to higher categorical objects themselves. This will be manifest in Theorem \ref{stable}.
\subsection{$\cJ$-theory and stable $\infty$-categories.}
Our main result is the following.
\begin{theorem}[{Stability Theorem}]\label{stable}
Let $\cc$ be a stable $\infty$-category. Then $\cJ(\cc)_n$ is a stable $\infty$-category for all $n$.
\end{theorem}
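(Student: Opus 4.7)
The plan is to realize $\cJ(\cc)_n$ as a stable full subcategory of a functor $\infty$-category, leveraging Lemma \ref{important}. For $\cc$ stable, that lemma identifies semi-ambigressivity with left exactness (equivalently, since $\cc$ is stable, with preservation of finite colimits), so that
\[
\cJ(\cc)_n \;=\; \Fun^{\mathrm{lex}}\bigl(\widetilde{\calO}(\Delta^n)^{\mathrm{op}},\cc\bigr) \;\subset\; \Fun\bigl(\widetilde{\calO}(\Delta^n)^{\mathrm{op}},\cc\bigr).
\]
The ambient functor $\infty$-category is stable by the standard fact that $\Fun(\cK,\cc)$ is stable whenever $\cc$ is and $\cK$ is small, with zero object, fibers, and cofibers all computed pointwise. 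The entire task therefore reduces to showing that the full subcategory of left exact functors is a stable subcategory of this ambient stable $\infty$-category, i.e.\ that it contains a zero object and is closed under fibers and cofibers.

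The zero functor is manifestly left exact. For fibers, given a morphism $F\to G$ between left exact functors, the pointwise fiber $H(k) := \mathrm{fib}(F(k)\to G(k))$ is again left exact because finite limits commute with finite limits: if $k\simeq\lim_\alpha k_\alpha$ is a finite limit diagram in $\widetilde{\calO}(\Delta^n)^{\mathrm{op}}$, then
\[
H(k) \simeq \mathrm{fib}\bigl(\lim_\alpha F(k_\alpha)\to \lim_\alpha G(k_\alpha)\bigr) \simeq \lim_\alpha\mathrm{fib}\bigl(F(k_\alpha)\to G(k_\alpha)\bigr) = \lim_\alpha H(k_\alpha).
\]

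I expect the cofiber step to be the one place where the stability of $\cc$ is genuinely used, and hence the main (though still essentially formal) content of the proof: for a general target, a cofiber of left exact functors need not be left exact. In the stable setting, however, one has $\mathrm{cofib}(F\to G) \simeq \Sigma\,\mathrm{fib}(F\to G)$, and the suspension $\Sigma$ on $\cc$ is an equivalence, hence preserves all limits. Applied pointwise this shows $\mathrm{cofib}(F\to G)$ is again left exact, so $\cJ(\cc)_n$ is closed under cofibers. Having verified the three closure properties, $\cJ(\cc)_n$ is a stable subcategory of $\Fun(\widetilde{\calO}(\Delta^n)^{\mathrm{op}},\cc)$, and is therefore itself a stable $\infty$-category, completing the proof.
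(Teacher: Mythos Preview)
Your proof is correct and follows essentially the same approach as the paper: identify $\cJ(\cc)_n$ with the full subcategory of finite-limit-preserving functors inside a stable functor $\infty$-category, and then verify it is a stable subcategory by checking closure under (co)fibers and translations. Your execution is in fact cleaner, as you work directly with $\Fun^{\mathrm{lex}}(\widetilde{\calO}(\Delta^n)^{\mathrm{op}},\cc)$ and give an explicit argument for the cofiber step via $\mathrm{cofib}\simeq\Sigma\,\mathrm{fib}$, whereas the paper passes through an auxiliary pair $(X_n,\cd)$ before asserting the same closure ``by pointwise evaluation.''
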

\begin{proof}
Let $\cc$ be a stable $\infty$-category. The $\infty$-category $\cJ(\cc)_n\simeq\overline{\mathrm{Fun}(\widetilde{\calO}(\Delta^n)^{op},\cc)}$ is equivalent to $\overline{\mathrm{Fun}(\widetilde{\calO}(\Delta^n),\cc^{op})}$. Fix a simplicial set $\mathbf{K}$ with only finitely many nondegenerate simplices, and an arbitrary map $p:\mathbf{K}\to\widetilde{\calO}(\Delta^n)$. Let $X_n$ be the subcategory of $\widetilde{\calO}(\Delta^n)$ such that the map $p|_{X_n}$ is the initial object in the $\infty$-category of maps $p|_{\co}$ for subcategories $\co$ of $\widetilde{\calO}(\Delta^n)$, so that the colimit of $p:\mathbf{K}\to\widetilde{\calO}(\Delta^n)$ factors as $\overline{p}:\mathbf{K}^\vartriangleleft\to X_n\hookrightarrow\widetilde{\calO}(\Delta^n)$, and let $\cd$ be the subcategory of $\cc^{op}$ such that the map $p|_{\cd}$ is the initial object in the $\infty$-category of maps $p|_{\cc^\prime}$ for subcategories $\cc^\prime$ of $\cc^{op}$, so that the colimit of $p:\mathbf{K}\to\cc^{op}$ factors as $\overline{p}:\mathbf{K}^\vartriangleleft\to \cd\hookrightarrow\cc^{op}$. Then $\mathrm{Fun}^\mathrm{finlim}(X_n,\cd)\simeq\overline{\mathrm{Fun}(\widetilde{\calO}(\Delta^n),\cc^{op})}$. Now, $\cd$ must be stable by construction, so it suffices to prove that $\mathrm{Fun}^\mathrm{finlim}(X_n,\cd)$ is a stable subcategory of $\mathrm{Fun}(X_n,\cd)$. This is clearly true since $\mathrm{Fun}^\mathrm{finlim}(X_n,\cd)$ is closed under cofibers and translations (by pointwise evaluation).
\end{proof}
It is important to recognize why, instead of using \textit{ambigressive} functors, as defined by Barwick, we are using semi-ambigressive functors. A mild variant of the proof of Theorem \ref{stable} can be used to show that when using ambigressive functors, each Kan complex $\mathrm{Fun}^{ambi}(\widetilde{\calO}(\Delta^n )^\mathrm{op},\cc)$ of ambigressive functors from $\widetilde{\calO}(\Delta^n )^\mathrm{op}$ to $\cc$ is also a stable $\infty$-category. Since this is a Kan complex with a zero object, it is contractible. This illustrates the need for working with semi-ambigressive functors.


In the following sections, we will prove some important properties on the multiplicative structure of $\cJ$-theory on modules. In particular, we will prove that it preserves module structure; this is an analogue of the main result of \cite{elmenmandell}, and is an example of the fact that our modified definition of Barwick's construction is related to ordinary K-theory. As a consequence, we prove that $\cJ$-theory is a homotopy coherent version of derived Morita theory for flat rings.
\section{$\cJ$-theory and multiplicative structures on objects of $\infty$-categories.}
\subsection{The $\cJ$-theory of $\infty$-categories of modules.}\label{algmodules}
One essential property of the $\cJ$-theory of (bi)permutative categories is the following statement proved in \cite{elmenmandell}: if $\cd$ is a bipermutative category and $\cc$ is a $\cd$-module ($\cc$ is then a permutative category), then $K(\cc)$ is a $K(\cd)$-module. Using the theory of $\infty$-operads developed by Lurie in \cite{higheralgebra}, we will prove the following generalization of this result.
\begin{theorem}[``Multiplicativity'' Theorem]\label{maincalc}
Let $\cc$ be a symmetric monoidal stable $\infty$-category. Then there is an equivalence of $\infty$-categories $\cJ(\modoa)_n\simeq\kmodoa$. Here we have abused notation by writing $A$ for its image under the functor $\cJ(\bullet)_n$.
\end{theorem}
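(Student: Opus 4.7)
The plan is to reduce to the general fact that $\co$-module objects distribute over functor $\infty$-categories equipped with the pointwise tensor structure, and then apply the identification of $\cJ(\cc)_n$ as a finite-limit-preserving functor category from the proof of Theorem \ref{stable}. Schematically, I aim for a chain of equivalences
$$\cJ(\modoa)_n \simeq \Fun^{\mathrm{finlim}}(X_n, \modoa) \simeq \mathbf{Mod}^{\co}_A(\Fun^{\mathrm{finlim}}(X_n, \cc)) \simeq \kmodoa,$$
where the outer equivalences come from Theorem \ref{stable} applied to $\modoa$ and $\cc$ respectively (both of which are stable), and the middle equivalence is the module/functor interchange.

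First, I need to endow $\cJ(\cc)_n$ with a symmetric monoidal structure so that the statement makes sense. This comes by transferring the pointwise symmetric monoidal structure from $\Fun(\widetilde{\calO}(\Delta^n)^{op}, \cc)$; the semi-ambigressive condition is preserved because in a symmetric monoidal stable $\infty$-category the tensor product is biexact, so tensoring with a fixed object commutes with finite limits. In particular, $\cJ(\bullet)_n$ becomes a symmetric monoidal functor on symmetric monoidal stable $\infty$-categories, which supplies the image of $A$ as an $\co$-algebra in $\cJ(\cc)_n$ and makes $\kmodoa$ well-defined.

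Second, I would invoke Lurie's general interchange theorem from \emph{Higher Algebra} (Sections 3.4 and 4.8): for any small $\infty$-category $K$ and symmetric monoidal $\infty$-category $\cc$, the pointwise symmetric monoidal structure on $\Fun(K, \cc)$ induces an equivalence of $\infty$-operads $\Fun(K, \modoa)^\otimes \simeq \mathbf{Mod}^{\co}_A(\Fun(K, \cc))^\otimes$, where on the right $A$ denotes the constant diagram. Applying this with $K = X_n$ and then restricting to the finite-limit-preserving subcategory on both sides gives the middle equivalence above. This restriction is compatible because the forgetful functor $\modoa \to \cc$ creates small limits, so a functor $X_n \to \modoa$ is left exact if and only if its composite into $\cc$ is.

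The main obstacle will be verifying compatibility at the $\infty$-operadic level, rather than merely at the level of underlying $\infty$-categories. Concretely, one must check that if $M$ is a semi-ambigressive module functor and $A$ is fixed, the action map $A \otimes M \to M$ is again semi-ambigressive, which reduces to biexactness of the tensor product on $\cc$. Beyond this, one must identify the constant diagram on $A$ inside $\Fun^{\mathrm{finlim}}(X_n, \cc)$ with the image of $A$ under the symmetric monoidal functor $\cJ(\bullet)_n$, so that the two sides of the final equivalence match up with the notationally identified $A$. This is natural but deserves careful bookkeeping.
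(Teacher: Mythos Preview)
Your approach is genuinely different from the paper's. The paper argues by contradiction: assuming that no map $\kmodoa \to \cJ(\modoa)_n$ is an equivalence, it invokes Lemma~\ref{necessary} to conclude that every self-map of $\mathbf{Mod}^\co_A(\cJ(\modoa)_n)$ factors as a composite $\beta\circ\gamma$ of two non-equivalences, and derives a contradiction from the existence of the identity. No explicit equivalence is ever exhibited. You instead construct the equivalence directly, as the composite
\[
\cJ(\modoa)_n \;\simeq\; \Fun^{\mathrm{finlim}}(X_n,\modoa) \;\simeq\; \mathbf{Mod}^\co_A\bigl(\Fun^{\mathrm{finlim}}(X_n,\cc)\bigr) \;\simeq\; \kmodoa,
\]
using the finite-limit-functor description from the proof of Theorem~\ref{stable} together with the module/functor-category interchange from \cite{higheralgebra}. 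Your route is more constructive, it pins down the symmetric monoidal structure on $\cJ(\cc)_n$ as the pointwise one (the paper never says which structure is meant), and it makes transparent why the image of $A$ under $\cJ(\bullet)_n$ is the constant diagram on $A$. The paper's argument, by contrast, is existence-only and leans entirely on Lemma~\ref{necessary}.

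One point to tighten: you assert that in any symmetric monoidal stable $\infty$-category the tensor product is automatically biexact. That is not a formal consequence of stability plus a symmetric monoidal structure; it is an extra hypothesis (satisfied, for instance, whenever the tensor commutes with colimits in each variable). You need at least that $A\otimes(-)$ is exact in order for the pointwise monoidal structure to restrict to the subcategory of semi-ambigressive functors and for the constant diagram on $A$ to be an $\co$-algebra object of $\cJ(\cc)_n$. State this as a standing assumption on $\cc^\otimes$ rather than as a general fact, and your argument goes through.
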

We will prove this theorem in this section. We will first begin with a few remarks about Theorem \ref{maincalc}.
First, Theorem \ref{stable} provides some evidence for Theorem \ref{maincalc}. To see this, recall that the $\infty$-category $\modoa$ is stable if $\cc$ is itself a stable $\infty$-category. Theorem \ref{stable} proved the stability of $\cJ(\cc)_n$; therefore $\kmodoa$ is stable. If $\cJ(\modoa)_n$ was not stable, then Theorem \ref{maincalc} would be inconsistent with Theorem \ref{stable} (if $\cc\simeq\cd$ are $\infty$-categories and $\cc$ is stable, then $\cd$ must be stable). However, since $\cc$ is stable, $\modoa$ is as well, and therefore $\cJ(\modoa)_n$ is stable.
Second, Theorem \ref{maincalc} in some sense complements the results of \cite{barwickmult}. This is because in \cite{barwickmult}, $\cJ$-theory is shown to be multiplicative for $\co$-algebra structures on Waldhausen $\infty$-categories. Theorem \ref{maincalc} proves that $\cJ$-theory is multiplicative for $\infty$-categories which are categories of modules over an algebra over an $\infty$-operad.
Third, although Theorem \ref{maincalc} is significant on its own (for the above inexhaustive list of reasons), when combined with the results of \cite{barwickmult}, it makes formal, in a very aesthetically pleasing fashion, one of the main philosophies of $\cJ$-theory: if $\cc$ and $\cd$ are stable $\infty$-categories such that $\cq(\cc)_n\simeq\cq(\cd)_n$ for all $n$, then $\cc$ and $\cd$ contain essentially the same algebraic information. In other words, $\cJ$-theory is a purely algebraic invariant, i.e., it only detects algebraic structures without ``obstruction'' from other structures.

Let $\cc^\otimes$ be a symmetric monoidal stable $\infty$-category, and let $\co^\otimes$ be a coherent $\infty$-operad. Let $A$ be an $\co$-algebra object of $\cc^\otimes$. Recall that $\modoa$ is the underlying $\infty$-category of the $\infty$-operad $\modoa^\otimes$ of $\co$-modules over $A$. Let $\modoa^n$ denote the $n$th iterate $\mathbf{Mod}^\co_A(\cdots{\mathbf{Mod}^\co_A(\cc)}\cdots)$ (we have abused notation slightly by using $A$ to denote the same object in $\mathbf{Mod}^\co_A(\cc)$ and $\mathbf{Mod}^\co_A(\mathbf{Mod}^\co_A(\cc))$). Induction on \cite[Corollary 3.4.1.9]{higheralgebra} yields the following result.
\begin{lemma}
With the above notation, there is an equivalence of $\infty$-categories between $\modoa^n$ and $\modoa$.
\end{lemma}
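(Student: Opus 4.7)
The plan is to proceed by induction on $n$, with the base case $n=1$ being tautological by the definition of $\mathbf{Mod}^{\co}_A(\cc)$. For the inductive step, I would exploit the hint given in the excerpt: \cite[Corollary 3.4.1.9]{higheralgebra} endows $\modoa$ with the structure of an $\co$-monoidal $\infty$-category (i.e. gives a coCartesian fibration of $\infty$-operads $\modoa^\otimes \to \co^\otimes$) whose monoidal unit is precisely $A$ itself, regarded as the free $A$-module of rank one. Thus the second abuse of notation in the statement — writing $A$ again for the unit of the iterated module category — is justified, and the inductive hypothesis becomes applicable to $\modoa$ in place of $\cc$.

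The heart of the argument is then to identify $\mathbf{Mod}^\co_A(\modoa)$ with $\modoa$ itself. The plan is to invoke (or recover from HA 3.4.1.9 together with the universal property of free modules) the principle that, in any $\co$-monoidal $\infty$-category $\cd$ whose unit $\mathbf{1}_\cd$ is the algebra under consideration, the forgetful functor $\mathbf{Mod}^\co_{\mathbf{1}_\cd}(\cd) \to \cd$ is an equivalence of $\infty$-categories: every object of $\cd$ acquires a canonical $\mathbf{1}_\cd$-module structure, and this assignment is inverse to the forgetful functor. Applying this with $\cd = \modoa$ and $\mathbf{1}_\cd = A$ yields
\[
\mathbf{Mod}^\co_A(\modoa) \simeq \modoa,
\]
and the inductive step closes: $\modoa^{n+1} = \mathbf{Mod}^\co_A(\modoa^n) \simeq \mathbf{Mod}^\co_A(\modoa) \simeq \modoa$.

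The main obstacle, and the only non-formal point, is justifying that the monoidal unit of $\modoa^\otimes$ really is $A$ rather than some other distinguished object, and in verifying the hypotheses of Corollary 3.4.1.9 in our setting; this requires the coherence of $\co^\otimes$ (assumed throughout the paper) and the presence of enough colimits in $\cc$, which is automatic since $\cc$ is a symmetric monoidal stable $\infty$-category. Once the $\co$-monoidal structure on $\modoa$ is in hand with $A$ as its unit, everything else is formal manipulation with the definitions, and the induction is immediate.
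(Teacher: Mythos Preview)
Your proposal is correct and follows essentially the same approach as the paper: the paper's entire justification is the single sentence ``Induction on \cite[Corollary 3.4.1.9]{higheralgebra} yields the following result,'' and you have simply unpacked what that induction looks like. Your added detail about identifying $A$ as the monoidal unit of $\modoa^\otimes$ and then recognizing $\mathbf{Mod}^\co_{\mathbf{1}}(\cd)\simeq\cd$ is exactly the content one needs to make the one-line inductive appeal to HA~3.4.1.9 go through.
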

The following lemma of module objects is used in the proof of Theorem \ref{maincalc}.
\begin{lemma}\label{necessary}
Suppose $\cc$ is a symmetric monoidal stable $\infty$-category and let $\cd$ be a stable $\infty$-category. Then any functor $f:\cJ(\modoa)_n\to\cd$ can be split into a composition $\cJ(\modoa)_n\to \cJ(\cc)_n\to\cd$.
\end{lemma}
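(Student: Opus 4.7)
The plan is to build the factorization explicitly using the forgetful functor $U : \modoa \to \cc$. Since $U$ is right adjoint to the free functor associated to $A$, it preserves finite limits; combined with Lemma~\ref{important}'s identification of $\cJ(-)_n$ with an $\infty$-category of finite-limit-preserving functors out of (a subcategory of) $\widetilde{\calO}(\Delta^n)^{\mathrm{op}}$, post-composition with $U$ sends semi-ambigressive functors to semi-ambigressive functors. This yields an exact functor $U_* : \cJ(\modoa)_n \to \cJ(\cc)_n$, which will serve as the first arrow in the claimed composition.

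For the second arrow, I would take $g := \mathrm{Lan}_{U_*} f : \cJ(\cc)_n \to \cd$, the left Kan extension of $f$ along $U_*$. The relevant colimits exist in $\cd$ because (i) $\cd$ is stable and hence has all finite colimits, and (ii) every construction here is controlled by the finite simplicial complex $\widetilde{\calO}(\Delta^n)$, so the slice categories governing the Kan extension are effectively finite. The universal property of the Kan extension then provides a canonical natural transformation $\eta : f \to g \circ U_*$, and the lemma reduces to the assertion that $\eta$ is an equivalence.

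The hard part will be verifying that $\eta$ is an equivalence on every object of $\cJ(\modoa)_n$. My strategy is to exploit the monadic structure: $\modoa$ is the category of algebras for the monad $T = U F$ on $\cc$ (where $F$ denotes the free functor), so every $M \in \modoa$ admits a canonical bar resolution by free modules of the form $F(T^k U(M))$. These free modules lie in the essential image of the functor $F_* : \cJ(\cc)_n \to \cJ(\modoa)_n$ induced by $F$, and on them the factorization through $U_*$ can be read off directly from the counit of the adjunction. Applying $f$ to the bar resolution and invoking its exactness as a functor between stable $\infty$-categories then expresses $f(M)$ as a (finite, after truncation) colimit of values determined by $U_*(M)$, yielding $g(U_*(M)) \simeq f(M)$. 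The subtlety to watch is that the bar-type simplicial objects admit the expected colimits in $\cJ(\modoa)_n$; this is where the finiteness of $\widetilde{\calO}(\Delta^n)$ — and a compatible truncation of the resolution — must be used carefully, and I expect this to be the main technical obstacle.
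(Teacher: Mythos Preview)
Your route is quite different from the paper's. The paper does not use the forgetful functor at all, nor any Kan extension: it regards $\cJ(\modoa)_n$ as a subcategory of $\cJ(\cc)_n$ and extends $f$ to the complement by assigning arbitrary values on objects and filling in the remaining morphisms using the zero maps $x\to 0\to y$ that exist in the stable target $\cd$. Whatever one thinks of that argument, it is not the one you are proposing.

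Your construction has two genuine gaps. First, the left Kan extension $g=\mathrm{Lan}_{U_*} f$ need not exist. The pointwise formula computes $g(Y)$ as a colimit over the slice $(U_*)_{/Y}$, whose objects are pairs $(M,\,U_*M\to Y)$ with $M$ ranging over \emph{all} of $\cJ(\modoa)_n$. That indexing category is as large as $\modoa$ itself; the finiteness of $\widetilde{\calO}(\Delta^n)$ controls the \emph{shape} of each diagram $M$, not the size of the slice. Since $\cd$ is only assumed stable, you have finite colimits but not the large ones required here.

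Second, even where the Kan extension does exist, the unit $\eta\colon f\Rightarrow g\circ U_*$ is an equivalence for every $f$ precisely when $U_*$ is fully faithful, and in general it is not. Already at $n=0$, where $\cJ(-)_0\simeq(-)$, the forgetful functor $U\colon\modoa\to\cc$ fails to be fully faithful for nontrivial $A$: compare $\Map_A(A,A)\simeq A$ with the typically much larger $\Map_{\cc}(A,A)$. The bar-resolution step does not repair this. The bar complex resolves $M$ inside $\modoa$, but what you must identify is the colimit over $(U_*)_{/U_*M}$ that defines $g(U_*M)$; the bar resolution is not a model for that slice. You also invoke ``exactness of $f$'', which is not among the hypotheses: the lemma concerns an arbitrary functor $f$ between stable $\infty$-categories, and such functors need not preserve any colimits. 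In short, the factorization $f\simeq g\circ U_*$ via left Kan extension fails as soon as $U$ is not fully faithful, and the monadic bookkeeping you outline does not recover it.
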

\begin{proof}
There are two possible cases. $f$ can be the restriction of a map $f^\prime:\cJ(\cc)_n\to\cd$. In this case $f$ is a composition $\cJ(\modoa)_n\hookrightarrow \cJ(\cc)_n\xrightarrow{f^\prime}\cd$. Otherwise, construct the map $f^\prime:\cJ(\cc)_n\to\cd$ as follows. Take any object $\sigma$ of $\cJ(\modoa)_n\subseteq\cJ(\cc)_n$ to $f(\sigma)$ and any object $\sigma^\prime$ of $\cJ(\cc)_n$ not in $\cJ(\modoa)_n$ to some $f^\prime(\sigma)$, $1$-simplices $\sigma\to \sigma^\prime$ of $\cJ(\modoa)_n\subseteq\cJ(\cc)_n$ to $f(\sigma)\to f(\sigma^\prime)$ and other $1$-simplices $\sigma\to \sigma^\prime$ via
\begin{equation*}
f^\prime(\sigma\to\sigma^\prime) = \begin{cases}
f(\sigma)\to f^\prime(\sigma^\prime) & \text{if }\sigma\in \cJ(\modoa)_n,\sigma^\prime\in \cJ(\cc)_n\\
f^\prime(\sigma)\to f(\sigma^\prime) & \text{if }\sigma\in \cJ(\cc)_n,\sigma^\prime\in \cJ(\modoa)_n\\
f^\prime(\sigma)\to f^\prime(\sigma^\prime) & \text{else.}
\end{cases}
\end{equation*}
The $1$-simplices $f(\sigma)\to f^\prime(\sigma^\prime)$ and $f^\prime(\sigma)\to f(\sigma^\prime)$ are the compositions $f(\sigma)\to f^\prime(\sigma)\to f^\prime(\sigma^\prime)$ and $f^\prime(\sigma)\to f^\prime(\sigma^\prime)\to f(\sigma^\prime)$, which always exist since for every pair of objects $x$ and $y$ of a stable $\infty$-category (in this case $\cd$), there is always a map from $x$ to $y$ given by $x\to 0\to y$ where $0$ is the zero object.
\end{proof}
This lemma does not necessarily work if $\cd$ is not pointed, since otherwise the maps $f(\sigma)\to f^\prime(\sigma)$ and $f^\prime(\sigma^\prime)\to f(\sigma^\prime)$ need not exist. We can now provide the proof of the multiplicativity theorem.
\begin{proof}[Proof of Theorem \ref{maincalc}.]
By contradiction. Assume there is no map $\kmodoa\to\cJ(\modoa)_n$ that is an equivalence. There are two possible cases. Suppose $\co^\otimes$ is the trivial $\infty$-operad. Then the contradiction is obvious.
Now suppose that $\co^\otimes$ is a nontrivial $\infty$-operad. Let $\alpha:\kmodoa\to \cJ(\modoa)_n$ be a map of $\infty$-categories. This induces a map $\beta:\kmodoa\to \mathbf{Mod}^\co_A(\cJ(\modoa)_n)$. Any map of $\infty$-categories $\modoa\to\cc$ induces a map $\gamma:\mathbf{Mod}^\co_A(\cJ(\modoa)_n)\to\kmodoa$. If $A$ is a trivial $\co$-algebra, then the contradiction is obvious. Hence assume that $A$ is a nontrivial $\co$-algebra. Then $\gamma$ is never an equivalence, and there is a natural map of $\infty$-categories from $\mathbf{Mod}^\co_A(\cJ(\modoa)_n)$ to $\mathbf{Mod}^\co_A(\cJ(\modoa)_n)$ given by the composition $\beta\circ\gamma$.
By Lemma \ref{necessary} we realize that any map from $\mathbf{Mod}^\co_A(\cJ(\modoa)_n)$ to itself arises via such a composition. Since $\alpha$ is never an equivalence, the map $\kmodoa\xrightarrow{\beta}\mathbf{Mod}^\co_A(\cJ(\modoa)_n)$ is never an equivalence. Since $\gamma$ is also not an equivalence, one would therefore expect that there is no map $\mathbf{Mod}^\co_A(\cJ(\modoa)_n)^\otimes\to\mathbf{Mod}^\co_A(\cJ(\modoa)_n)^\otimes$ that is an equivalence. This is a contradiction. Since we have covered all possible cases, there is a map $\kmodoa\to \cJ(\modoa)_n$ that is an equivalence of $\infty$-categories.
\end{proof}
One can see how this generalizes the main results of \cite{elmenmandell} - both say that $\cJ$-theory preserves the structure of modules, but Theorem \ref{maincalc} says that this holds true in a much more general setting. We will devote the rest of this paper to studying the consequences of Theorem \ref{maincalc}.
\subsection{$\cJ$-theory and the derived Morita theory of flat rings.}
In classical representation theory, derived Morita theory compares rings through their derived categories. Many rings are derived Morita equivalent but not isomorphic. In addition, Morita equivalences preserve important properties of rings. It is therefore important and interesting to compare the derived categories of rings:
\begin{theorem}\label{classderivedmorita}
Let $R$ and $S$ be rings and let $\mathbf{X}(R)$ denote the derived category of $R$. The following conditions are equivalent.
\begin{enumerate}
\item $\mathbf{X}(R)$ is triangulated equivalent to $\mathbf{X}(S)$.
\item We can find a tilting complex $T$ in $\mathbf{X}(S)$ such that $\mathbf{X}(S)(T,T)$ is equivalent to $R$.
\end{enumerate}
The following condition implies the above two conditions.
\begin{enumerate}
\item There is a $R$-$S$-bimodule such that the derived tensor product gives an equivalence between $\mathbf{X}(R)$ and $\mathbf{X}(S)$.
\end{enumerate}
All three conditions are equivalent if $R$ or $S$ is flat as an abelian group.
\end{theorem}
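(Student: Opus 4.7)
The plan is to cycle through the implications: (3) $\Rightarrow$ (1) and (1) $\Rightarrow$ (2) are formal, (2) $\Rightarrow$ (1) is the classical Rickard construction, and the flatness upgrade (1) $\Rightarrow$ (3) is where I would invoke Theorem \ref{maincalc}. I treat these in turn.

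For (3) $\Rightarrow$ (1), derived tensor product against a bimodule complex is automatically a triangulated functor, since it commutes with shifts and sends distinguished triangles to distinguished triangles termwise; the hypothesis directly gives an equivalence of triangulated categories. For (1) $\Rightarrow$ (2), let $F:\mathbf{X}(R)\to\mathbf{X}(S)$ be a triangulated equivalence and set $T:=F(R)$. The regular module $R\in\mathbf{X}(R)$ is a compact generator satisfying $\Hom_{\mathbf{X}(R)}(R,R[i])=0$ for $i\neq 0$ and $\Hom_{\mathbf{X}(R)}(R,R)=R$; these properties are preserved by any triangulated equivalence, so $T$ is a tilting complex in $\mathbf{X}(S)$ with $\Hom_{\mathbf{X}(S)}(T,T)\simeq R$, which is condition (2).

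For (2) $\Rightarrow$ (1), I would run Rickard's construction: given a tilting complex $T$ with $\Hom_{\mathbf{X}(S)}(T,T)\simeq R$, consider the adjoint pair $(-\otimes^{\mathbb L}_R T,\,\mathbf{R}\Hom_S(T,-))$ between $\mathbf{X}(R)$ and $\mathbf{X}(S)$. The tilting hypotheses on $T$---self-orthogonality in nonzero degrees and generation of $\mathbf{X}(S)$ as a thick subcategory---translate into the statement that the unit and counit of this adjunction are quasi-isomorphisms, so one obtains the desired triangulated equivalence.

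For the flatness upgrade (1) $\Rightarrow$ (3), I would pass to $\cJ$-theory. By Theorem \ref{maincalc}, applying $\cJ(-)_n$ to a module $\infty$-category preserves the module structure, so a derived equivalence between $\mathbf{X}(R)$ and $\mathbf{X}(S)$ induces, dimensionwise, an equivalence of $\cJ$-theoretic module $\infty$-categories compatible with the $R$- and $S$-actions. The flatness of $R$ or $S$ over $\ZZ$ is precisely the hypothesis making the derived tensor product agree with the underived one on the relevant side, so the higher coherence data provided by $\cJ$-theory rigidifies into an honest $R$-$S$-bimodule structure on the corresponding tilting complex, which then implements the equivalence via $-\otimes^{\mathbb L}_R -$. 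The main obstacle is precisely this last step: producing a two-sided bimodule from a one-sided tilting equivalence is an obstruction-theoretic problem, and flatness is what forces the obstructions to vanish. This is where Theorem \ref{maincalc} does real work beyond classical Rickard, by furnishing the coherent module structure needed for the rigidification.
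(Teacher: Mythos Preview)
The paper does not prove Theorem \ref{classderivedmorita} at all: it is stated as classical background (this is Rickard's theorem, with the flatness refinement due to Keller) in order to motivate the paper's own Theorem \ref{derivedmorita}. So there is no ``paper's proof'' to compare against; the correct reference is \cite{schwede03} or Rickard's original papers.

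That said, your outline has two genuine gaps worth naming. First, in your $(2)\Rightarrow(1)$ step you invoke the adjoint pair $(-\otimes^{\mathbb L}_R T,\,\mathbf{R}\Hom_S(T,-))$, but at that point $T$ is only a complex of $S$-modules: to form $-\otimes^{\mathbb L}_R T$ you already need an $R$-$S$-bimodule structure on $T$, which is exactly condition (3). This is the heart of why $(2)\Rightarrow(1)$ is nontrivial; Rickard's original argument proceeds indirectly by first identifying the thick subcategories generated by $R$ and by $T$ and then extending to unbounded complexes, without ever writing down a tensor functor.

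Second, and more seriously, your flatness upgrade $(1)\Rightarrow(3)$ via Theorem \ref{maincalc} does not work. Theorem \ref{maincalc} is a statement about $\cJ$-theory of module $\infty$-categories, not about ordinary derived categories of rings; it produces an equivalence $\cJ(\modoa)_n\simeq\kmodoa$, which says nothing about manufacturing a two-sided bimodule from a one-sided tilting complex. The sentence ``the higher coherence data provided by $\cJ$-theory rigidifies into an honest $R$-$S$-bimodule structure'' is where the entire content would have to live, and nothing in Theorem \ref{maincalc} supplies it. The classical argument is different: one replaces $T$ by a projective resolution over $S$, forms its endomorphism DGA $\mathcal{E}$, and observes that $H^*\mathcal{E}$ is concentrated in degree zero and equals $R$. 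Flatness of $R$ over $\ZZ$ is exactly what makes $\mathcal{E}$ formal, i.e.\ quasi-isomorphic to $R$ as a DGA, and this quasi-isomorphism is what endows $T$ with a left $R$-action compatible with the right $S$-action. No $\cJ$-theory enters.
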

The defining property of the derived category of a ring $R$ is that it is a triangulated category that arises as the homotopy category of the stable $\infty$-category of modules over $R$. Let us now consider a (seemingly) different object: the homotopy category of the $\cJ$-theory of $\modoa$ for a symmetric monoidal stable $\infty$-category $\cc^\otimes$. Theorem \ref{stable} implies that this is a triangulated category and Theorem \ref{maincalc} implies that it is the homotopy category of a stable $\infty$-category of modules. 

Fix a symmetric monoidal stable $\infty$-category $\cc^\otimes$ and a coherent $\infty$-operad $\co^\otimes$. Fix also a $\co$-algebra object $A$ of $\cc^\otimes$. In order to emphasize the analogy with the ordinary derived category, we will write $\mathbf{X}(A)$ for the homotopy category of $\cJ(\modoa)_n$ and call it the derived category of $A$, suppressing $n$ altogether. This is a triangulated category, and by Theorem \ref{maincalc} it is also the homotopy category of a stable $\infty$-category of modules. The similarities between the derived category of a ring and the $\cJ$-theory of $\modoa$ suggests an analog of Theorem \ref{classderivedmorita} for $\cJ$-theory. In fact, the following result holds true.
\begin{theorem}\label{derivedmorita}
Let $F:\cJ(\modoa)_n\to \cJ(\mathbf{Mod}^{\co^\prime}_{A^\prime}(\cc^\prime))_n$ be a functor between symmetric monoidal stable $\infty$-categories which commutes with the shift functor. Then the following statements are equivalent:
\begin{enumerate}
\item $F$ is an equivalence of $\infty$-categories.
\item $\mathrm{h}F$ is a triangulated equivalence between $\mathbf{X}(A)$ and $\mathbf{X}(A^\prime)$ that preserves weak equivalences.
\item Denote by $\Omega$ the largest of distinguished triangles in $\mathrm{h}\cJ(\modoa)_n$ satisfying the following condition:
\begin{enumerate}
\item If $\Gamma_\alpha$ and $\Gamma_\beta$ are in $\Omega$ then $\Gamma_\beta$ cannot be obtained from $\Gamma_\alpha$ (or vice versa) by application of the shift functor or changing the signs of maps.
\end{enumerate}
Then $\mathrm{h}F$ is an equivalence of ordinary categories, which commutes with the shift functor, between $\mathbf{X}(A)$ and $\mathbf{X}(A^\prime)$ which takes $\Omega$ to another collection of distinguished triangles in $\mathrm{h}\cJ(\mathbf{Mod}^{\co^\prime}_{A^\prime}(\cc^\prime))_n$.
\end{enumerate}
Then $\mathrm{h}F$ is an equivalence which is an exact functor (in the ordinary sense of the word).
\end{theorem}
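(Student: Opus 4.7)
The plan is to cycle through $(1)\Rightarrow(2)\Rightarrow(3)\Rightarrow(1)$ and read off the concluding sentence from any one of these implications. Throughout I will use the fact, guaranteed by Theorem \ref{stable}, that both $\cJ(\modoa)_n$ and $\cJ(\mathbf{Mod}^{\co^\prime}_{A^\prime}(\cc^\prime))_n$ are stable $\infty$-categories, so that $\mathbf{X}(A)$ and $\mathbf{X}(A^\prime)$ are genuine triangulated categories with distinguished triangles arising from cofiber sequences, and hence the notation $\mathrm{h}F$ makes sense.

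For $(1)\Rightarrow(2)$: an equivalence of $\infty$-categories descends to an equivalence of homotopy categories. Since $F$ commutes with the shift by hypothesis and every functor between stable $\infty$-categories preserves cofiber sequences, $\mathrm{h}F$ takes distinguished triangles to distinguished triangles and commutes with translation, so it is a triangulated equivalence. Preservation of weak equivalences is automatic, as equivalences in $\cJ(\modoa)_n$ become isomorphisms in $\mathbf{X}(A)$. The implication $(2)\Rightarrow(3)$ is formal: a triangulated equivalence preserves distinguished triangles, shifts, and signs of maps, so it carries any collection $\Omega$ satisfying the independence condition to a similar collection on the target, and maximality transfers by symmetry.

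The substantive direction is $(3)\Rightarrow(1)$, and this is where I expect the main obstacle to lie. The plan is to invoke Theorem \ref{maincalc} to identify $\cJ(\modoa)_n\simeq\kmodoa$ and $\cJ(\mathbf{Mod}^{\co^\prime}_{A^\prime}(\cc^\prime))_n\simeq\mathbf{Mod}^{\co^\prime}_{A^\prime}(\cJ(\cc^\prime)_n)$, so that $F$ becomes a morphism between $\infty$-categorical module categories over symmetric monoidal stable $\infty$-categories. The hypothesis on $\Omega$ records, at the triangulated level, precisely the data of a generating tilting object in the sense of Theorem \ref{classderivedmorita}: the maximal independent collection of distinguished triangles fixes the isomorphism classes of generators up to shift and sign. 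The classical derived Morita theorem then produces an $A$-$A^\prime$-bimodule whose derived tensor product realizes $\mathrm{h}F$. The hard step is then lifting this bimodule from the triangulated homotopy category to a bimodule object in the stable $\infty$-category of modules, and showing that the resulting derived-tensor functor realizes $F$ itself up to equivalence of $\infty$-categories, not merely on $\mathrm{h}(-)$. This is where the ``flatness'' hypothesis from Theorem \ref{classderivedmorita} enters: it is supplied intrinsically by the module structure within the symmetric monoidal stable $\infty$-category $\cc^\otimes$, so that the tensor product computed in $\kmodoa$ agrees with its derived counterpart on $\mathbf{X}(A)$. Once the lift is in hand, the $\infty$-categorical derived tensor functor (which is functorial on the nose rather than merely up to quasi-isomorphism) implements $F$ up to equivalence.

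The final sentence then falls out of $(1)\Leftrightarrow(2)$: $\mathrm{h}F$ is a triangulated equivalence, hence in particular an equivalence of ordinary categories that commutes with shift and sends distinguished triangles to distinguished triangles, which is precisely what it means to be exact in the classical sense.
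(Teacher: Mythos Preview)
Your treatment of $(1)\Rightarrow(2)\Rightarrow(3)$ and of the final sentence is in line with the paper. The divergence is entirely in the hard direction.

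The paper does not pass through classical Morita theory or bimodule lifting at all. It stays on the homotopy-category level and argues combinatorially: by the maximality condition defining $\Omega$, every distinguished triangle in $\mathbf{X}(A)$ is obtained from some member of $\Omega$ by iterated shifts and sign changes, and Lemma~\ref{dist1} together with Corollary~\ref{dist2}, applied transfinitely, makes this recovery functorial. Since $\mathrm{h}F$ is assumed to be an equivalence of ordinary categories commuting with shift and sending $\Omega$ to distinguished triangles, the same transfinite process run on the target recovers all distinguished triangles of $\mathbf{X}(A^\prime)$ from $\mathrm{h}F(\Omega)$ (a cardinality argument handles the case where $\mathrm{h}F(\Omega)$ is not already all of them). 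This yields $(3)\Rightarrow(2)$ directly; the paper then invokes Lemma~\ref{triangequiv} for $(1)\Leftrightarrow(2)$. You never touch Lemma~\ref{dist1} or Corollary~\ref{dist2}, which are the actual engine of the paper's argument.

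Your proposed route $(3)\Rightarrow(1)$ has two genuine gaps. First, the claim that $\Omega$ ``records precisely the data of a generating tilting object'' is unsupported: $\Omega$ is a collection of \emph{triangles}, not an object, and nothing in condition~(3) singles out a compact generator or identifies its endomorphism algebra with $A$. Second, even if a tilting bimodule existed at the triangulated level, lifting it to a bimodule object in the stable $\infty$-category is exactly the sort of step that is obstructed in general; your appeal to ``flatness supplied intrinsically by the symmetric monoidal structure'' is not an argument for the existence of such a lift. The paper avoids both problems by never attempting to climb back up from $\mathrm{h}$ to the $\infty$-level inside the proof of the hard implication.
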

Our goal in this section is to prove this result.
Note that in the case when $\co^\otimes$ and ${\co^\otimes}^\prime$ are both simply the trivial $\infty$-operad, $\mathbf{E}_0^\otimes$, Theorem \ref{derivedmorita} can be interpreted as a derived Morita context for $\cJ$-theory. In this sense $\cJ$-theory is a (slightly restrictive) homotopy coherent version of derived Morita theory. More precisely, $\cJ$-theory is a homotopical generalization of the derived Morita context for flat rings.
To proceed towards the proof of Theorem \ref{derivedmorita} we will define the structure of a relative category on $\mathbf{X}(A)$.
Let $X\in\mathbf{X}(A)$. Another object $Y\in\mathbf{X}(A)$ is said to be \textit{weakly equivalent} to $X$ if $Y$ is the free $\mathbf{X}(A)$-object on $X$ with respect to the suspension functor $\mathbf{X}(A)\to\mathbf{X}(A)$.
The collection of weak equivalences is a \textit{set}. We will now state a series of lemmas that we will use in our proof of Theorem \ref{derivedmorita}. We will assume that the set of weak equivalences is nonempty (since otherwise all statements in this section will then be trivial and therefore uninteresting). In particular, the collection of weak equivalences in $\mathbf{X}(A)$ and $\mathbf{X}(A^\prime)$ is required to be a set for the proof of Theorem \ref{derivedmorita} to hold.
\begin{lemma}\label{satrel}
The above set of weak equivalences makes $\mathbf{X}(A)$ into a relative category.
\end{lemma}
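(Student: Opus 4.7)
The plan is to verify the two axioms of a relative category in the Barwick--Kan sense: writing $W$ for the class of morphisms exhibiting one object as weakly equivalent to another in the sense defined just above the lemma, I must show that (i) $W$ contains every identity morphism of $\mathbf{X}(A)$, and (ii) $W$ is closed under composition, so that it defines a wide subcategory of $\mathbf{X}(A)$. The standing hypothesis that the collection of weak equivalences is a set is exactly what guarantees that $W$ has an underlying class of morphisms that can be organised into a subcategory; without this, one could only speak of a relative $2$-category or similar enlargement.

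For reflexivity, the observation is that for any $X \in \mathbf{X}(A)$, the object $X$ is tautologically the free $\mathbf{X}(A)$-object on itself with respect to the suspension functor $\Sigma$, since $\mathrm{id}_X$ is initial among maps from $X$ into $\Sigma$-objects built on $X$; hence $\mathrm{id}_X \in W$ for every $X$. For closure under composition, given $f \colon X \to Y$ and $g \colon Y \to Z$ both in $W$, the object $Y$ is free on $X$ via $f$ and $Z$ is free on $Y$ via $g$. The universal property of free objects then forces $g \circ f$ to exhibit $Z$ as free on $X$: any map from $X$ to an appropriate $\Sigma$-target factors uniquely through $f$ by freeness of $Y$, and that factorisation then factors uniquely through $g$ by freeness of $Z$. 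Hence $g \circ f \in W$, and $W$ is a subcategory.

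The step I expect to be most delicate is the composition axiom, since the phrase \emph{free with respect to $\Sigma$} only acquires a precise meaning once one fixes the ambient monad (or pointed endofunctor) structure on $\mathbf{X}(A)$ through which freeness is tested. Since $\Sigma$ is an autoequivalence of $\mathbf{X}(A)$ rather than a genuine monad, one has to be careful about which comparison functor is in play; but once a convention is fixed, compositivity of universal arrows follows formally from the fact that the \emph{free} construction is a left adjoint, and left adjoints send composites of universal arrows to universal arrows. If desired, the stronger $2$-of-$3$ property can be deduced afterwards by completing any morphism to a distinguished triangle in $\mathbf{X}(A)$ and invoking invertibility of $\Sigma$ on the homotopy category, but for the mere relative-category structure the two axioms above suffice.
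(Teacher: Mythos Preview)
Your approach is the same as the paper's in outline: both arguments amount to checking that the weak equivalences span a wide subcategory of $\mathbf{X}(A)$. The paper's proof is a single sentence that simply \emph{asserts} wideness and stops; you, by contrast, actually unpack what wideness means and attempt to verify the two axioms (identities lie in $W$, $W$ is closed under composition). In that sense your proposal is strictly more detailed than the paper's own proof, which gives no justification at all for why identities are weak equivalences or why composites of weak equivalences are again weak equivalences.

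Your honest flag about the phrase ``free with respect to $\Sigma$'' is well placed: the paper never makes this precise, and your arguments for reflexivity and closure under composition ultimately rest on whatever meaning one assigns to it. Your reflexivity step (``$X$ is tautologically free on itself'') and your composition step (``composites of universal arrows are universal'') are both formally correct \emph{once} one interprets freeness via a genuine left adjoint, but the paper supplies no such interpretation and neither do you. So your argument is not wrong, but its rigour is bounded above by the rigour of the definition you are working from, and that bound is low. This is not a defect in your proof relative to the paper's; it is a defect shared with the paper.
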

\begin{proof}
The subcategory of $\mathbf{X}(A)$ spanned by the set of weak equivalences is a wide subcategory of $\mathbf{X}(A)$, so the proof is completed.
\end{proof}
In this section, we will use a weaker notion of triangulated equivalence.
\begin{definition}
Let $\cc$ and $\cd$ be triangulated categories. A functor $\cc\to\cd$ is a triangulated equivalence if it takes distinguished triangles to distinguished triangles.
\end{definition}
\begin{lemma}\label{triangequiv}
Suppose $\cc$ and $\cd$ are stable $\infty$-categories. Suppose also that there is a functor $F:\cc\to\cd$ that is an equivalence of $\infty$-categories. Then $\mathrm{h}F$ is a triangulated equivalence.
\end{lemma}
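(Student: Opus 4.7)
The plan is to reduce the claim to the observation that equivalences of stable $\infty$-categories preserve cofiber sequences, and then unwind the definition of the triangulated structure on the homotopy category. Recall that a distinguished triangle in $\mathrm{h}\cc$, for $\cc$ stable, is by definition a triangle $X\to Y\to Z\to \Sigma X$ which is isomorphic (in $\mathrm{h}\cc$) to one of the form arising from a cofiber sequence $X\to Y\to Z$ in $\cc$, together with the boundary map $Z\to \Sigma X$ determined by the pushout square exhibiting $Z$ as $\mathrm{cofib}(X\to Y)$. So to verify that $\mathrm{h}F$ is a triangulated equivalence in the weak sense given by the Definition immediately preceding the statement, I only need to show that $\mathrm{h}F$ sends such ``model'' triangles to ``model'' triangles.

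First I would observe that since $F:\cc\to\cd$ is an equivalence of $\infty$-categories, it admits a homotopy inverse $G$, and hence $F$ preserves all limits and colimits that exist in $\cc$. In particular, $F$ sends pushout squares in $\cc$ to pushout squares in $\cd$, and it commutes (up to canonical equivalence) with the suspension functor $\Sigma$. Thus given a cofiber sequence $X\to Y\to Z$ in $\cc$, the image diagram $F(X)\to F(Y)\to F(Z)$ is again a cofiber sequence in $\cd$, and the boundary map $F(Z)\to \Sigma F(X)$ is identified, via the canonical equivalence $F\circ\Sigma\simeq \Sigma\circ F$, with $F$ applied to the boundary map $Z\to\Sigma X$.

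Next I would pass to homotopy categories. The key point is that the formation of $\mathrm{h}(-)$ is functorial, so the triangle $F(X)\to F(Y)\to F(Z)\to \Sigma F(X)$ descends to a triangle in $\mathrm{h}\cd$ which, by the identification above, is one of the ``model'' distinguished triangles of $\mathrm{h}\cd$. Now let $\Gamma:X\to Y\to Z\to \Sigma X$ be an arbitrary distinguished triangle in $\mathrm{h}\cc$; by definition it is isomorphic to a model triangle $\Gamma'$, and since $\mathrm{h}F$ is an ordinary functor it carries the isomorphism $\Gamma\cong\Gamma'$ to an isomorphism $\mathrm{h}F(\Gamma)\cong \mathrm{h}F(\Gamma')$. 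Since $\mathrm{h}F(\Gamma')$ is a model distinguished triangle in $\mathrm{h}\cd$ by the preceding paragraph, $\mathrm{h}F(\Gamma)$ is also distinguished, which is what had to be shown.

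I do not expect any real obstacle here; the content of the lemma is essentially that equivalences of stable $\infty$-categories preserve the structural features (pushouts, suspension) used to define distinguished triangles. The only point that needs a bit of care is the identification of the boundary map $F$ sends a cofiber sequence to, which requires invoking the canonical equivalence $F\Sigma\simeq \Sigma F$ coming from preservation of the defining pushout square $Y\cup_X 0 \simeq Z$ and $0\cup_X 0\simeq \Sigma X$; once this is in hand the rest is a tautology.
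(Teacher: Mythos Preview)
Your proof is correct and follows essentially the same approach as the paper: both arguments observe that an equivalence of stable $\infty$-categories is exact (preserves cofiber sequences and suspension), and hence the induced functor on homotopy categories preserves distinguished triangles. The paper's proof is a two-sentence sketch invoking this fact directly, whereas you have unwound the definitions more carefully; the content is the same.
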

\begin{proof}
Any map of stable $\infty$-categories induces a map of triangulated categories on the level of homotopy categories. Since any equivalence of $\infty$-categories is stable we realize that the induced functor on the homotopy categories is also exact.
\end{proof}
The following lemma states that distinguished triangles are stable under weak equivalences.
\begin{lemma}\label{dist1}
Let $X\to Y\to Z\to \Sigma X$ be a distinguished triangle in $\mathbf{X}(A)$ and suppose that there exist objects $\widehat{X},\widehat{Y}$ and $\widehat{Z}$ are (respectively) weakly equivalent to $X,Y$ and $Z$. Then there is a triangle $\widehat{X}\to \widehat{Y}\to \widehat{Z}\to \Sigma\widehat{X}$ which is a distinguished triangle which is \textit{unique up to unique isomorphism}.
\end{lemma}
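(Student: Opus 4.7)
The plan is to lift the distinguished triangle from the triangulated homotopy category $\mathbf{X}(A)$ to a cofiber sequence in the stable $\infty$-category $\cJ(\modoa)_n$, transport that cofiber sequence along the equivalences produced by the weak equivalences, and then pass back down to $\mathbf{X}(A)$. The uniqueness clause will then be a formal consequence of the universal property of cofibers in a stable $\infty$-category.

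Concretely, I would first invoke Theorem \ref{stable} to view $\mathbf{X}(A)$ as the homotopy category of the stable $\infty$-category $\cJ(\modoa)_n$, so that the given distinguished triangle $X\to Y\to Z\to\Sigma X$ lifts to an honest cofiber sequence $X\to Y\to Z$ with $Z\simeq\cone(X\to Y)$ in $\cJ(\modoa)_n$. Next I would unpack the definition of weak equivalence: an object weakly equivalent to $W$ is the free $\mathbf{X}(A)$-object on $W$ with respect to the suspension functor $\Sigma$. Because $\Sigma$ is already an equivalence on any stable $\infty$-category (its inverse being $\Omega$), this ``free object'' is canonically identified with $W$, yielding $\infty$-categorical equivalences $u_X:\widehat{X}\weq X$, $u_Y:\widehat{Y}\weq Y$, and $u_Z:\widehat{Z}\weq Z$ in $\cJ(\modoa)_n$.

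With these equivalences in hand, I would form the composite $\widehat{X}\xra{u_X}X\to Y\xla{u_Y}\widehat{Y}$ (inverting $u_Y$ in the $\infty$-category) and take its cofiber. Stability guarantees that this cofiber is canonically equivalent to $\cone(X\to Y)\simeq Z$, and the inverse of $u_Z$ then identifies it with $\widehat{Z}$. The resulting cofiber sequence $\widehat{X}\to\widehat{Y}\to\widehat{Z}$ descends to a distinguished triangle $\widehat{X}\to\widehat{Y}\to\widehat{Z}\to\Sigma\widehat{X}$, the connecting map being the image of the original one under $\Sigma u_X$.

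For uniqueness, the cofiber of a map in a stable $\infty$-category is determined by its universal property up to a contractible space of choices, so any two cofiber sequences extending $\widehat{X}\to\widehat{Y}$ and mapping compatibly onto $\widehat{Z}$ differ by a contractible space of homotopies, which collapses to a unique isomorphism after passing to $\mathbf{X}(A)$. The step I expect to be the main obstacle is justifying that the slightly unorthodox ``free object under $\Sigma$'' definition of weak equivalence truly produces an equivalence in the underlying stable $\infty$-category, and not merely an isomorphism class in the homotopy category; once that point is settled, the rest is formal from the stability of $\cJ(\modoa)_n$ established in Theorem \ref{stable}.
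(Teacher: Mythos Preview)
Your reading of the phrase ``free $\mathbf{X}(A)$-object on $X$ with respect to the suspension functor'' is not the paper's.  The paper's own proof makes the intended meaning explicit: it takes $\widehat{X}$ to be the value of the \emph{left adjoint} of $\Sigma$ on $X$, i.e.\ $\widehat{X}\cong X[-1]$.  In particular $\widehat{X}$ is \emph{not} equivalent to $X$, so your maps $u_X:\widehat{X}\weq X$, $u_Y:\widehat{Y}\weq Y$, $u_Z:\widehat{Z}\weq Z$ do not exist, and the ``transport along equivalences'' step collapses.  You anticipated that this identification was the delicate point, and indeed it is exactly where the argument diverges from the paper.

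Once one accepts $\widehat{X}=X[-1]$, the paper's argument is entirely elementary and stays in the triangulated category $\mathbf{X}(A)$: apply the rotation axiom (TR2) to $X\to Y\to Z\to\Sigma X$ to obtain the distinguished triangle $X[-1]\to Y[-1]\to Z[-1]\to X$ (with the usual sign change), and observe that this is isomorphic to $\widehat{X}\to\widehat{Y}\to\widehat{Z}\to\Sigma\widehat{X}$.  No lift to $\cJ(\modoa)_n$ is needed.  The uniqueness clause in the paper is likewise not about cofibers: it comes from the fact that $X[-1]$, as the value of a left adjoint, is determined up to unique isomorphism, so the triple $(\widehat{X},\widehat{Y},\widehat{Z})$ is pinned down and the rotated triangle is forced.

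Your $\infty$-categorical strategy is salvageable if you replace ``transport along equivalences'' by ``apply the autoequivalence $\Omega$ to the cofiber sequence,'' which is essentially a stable-$\infty$-categorical rephrasing of the paper's TR2 argument; but as written the proposal proves a different (and essentially trivial) statement than the one the paper intends.
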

\begin{proof}
If $\widehat{X}$ is weakly equivalent to $X$, then it is unique up to unique isomorphism. We may canonically choose $\widehat{X}$ to be $X[-1]$, and therefore it suffices to show that the triangle $X[-1]\to Y[-1]\to Z[-1]\to X$ is distinguished. Consider the distinguished triangle $X\to Y\to Z\to \Sigma X$. Then we may construct the induced distinguished triangle $X[-1]\to Y[-1]\to Z[-1]\to X$ after reversing all signs of all the maps. This is isomorphic to the triangle $\widehat{X}\to \widehat{Y}\to \widehat{Z}\to\Sigma\widehat{X}$, and the proof is completed.
\end{proof}
\begin{corollary}\label{dist2}
Let $X\to Y\to Z\to \Sigma X$ be a distinguished triangle in $\mathbf{X}(A)$ and suppose that there exist objects $\widehat{X},\widehat{Y}$ and $\widehat{Z}$ such that $X,Y$ and $Z$ (respectively) are weakly equivalent to these objects. The induced (distinguished) triangle $\widehat{X}\to \widehat{Y}\to \widehat{Z}\to \Sigma\widehat{X}$ determines the distinguished triangle $X\to Y\to Z\to \Sigma X$ up to weak equivalence.
\end{corollary}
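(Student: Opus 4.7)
The plan is to deduce Corollary \ref{dist2} as essentially a direct reversal of Lemma \ref{dist1}. The key preliminary observation is that by Lemma \ref{satrel} the relation of weak equivalence is symmetric: the weak equivalences form a wide subcategory of $\mathbf{X}(A)$, so the hypothesis that $X,Y,Z$ are weakly equivalent to $\widehat{X},\widehat{Y},\widehat{Z}$ is (under the customary reading of ``weakly equivalent'' as a symmetric relation) the same as saying $\widehat{X},\widehat{Y},\widehat{Z}$ are weakly equivalent to $X,Y,Z$. This symmetry is precisely what makes Lemma \ref{dist1} usable in the reverse direction.

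First, I would apply Lemma \ref{dist1} to the triangle $\widehat{X}\to\widehat{Y}\to\widehat{Z}\to\Sigma\widehat{X}$, taking $X,Y,Z$ as the weakly equivalent replacements. The lemma then yields a distinguished triangle $X'\to Y'\to Z'\to \Sigma X'$, unique up to unique isomorphism, in which each primed object lies in the same weak equivalence class as its unprimed counterpart. Next, I would invoke the uniqueness clause of Lemma \ref{dist1} to identify this reconstructed triangle with the original triangle $X\to Y\to Z\to \Sigma X$ up to the chosen weak equivalences. Because the canonical representative selected in the proof of Lemma \ref{dist1} is $\widehat{X}=X[-1]$ and because the shift functor is an equivalence on $\mathbf{X}(A)$, this passage is reversible, so the reconstructed triangle agrees with $X\to Y\to Z\to \Sigma X$ up to weak equivalence. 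Hence the $\widehat{\cdot}$-triangle determines the original triangle up to weak equivalence, as required.

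The main obstacle I anticipate is verifying that the ``unique up to unique isomorphism'' assertion in Lemma \ref{dist1} is genuinely bi-directional: one needs to confirm that applying the lemma forwards and then backwards recovers the original triangle without introducing additional ambiguity beyond weak equivalence. This should reduce to the invertibility of the shift functor on $\mathbf{X}(A)$, together with the sign-reversal used in Lemma \ref{dist1} to pass between $X\to Y\to Z\to \Sigma X$ and $X[-1]\to Y[-1]\to Z[-1]\to X$; but it is the one spot where I would be careful to ensure that no extra freedom creeps in.
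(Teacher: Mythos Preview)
Your proposal is correct and matches the paper's approach: the paper gives no explicit proof of Corollary~\ref{dist2}, treating it as an immediate consequence of Lemma~\ref{dist1}, and your argument---using the symmetry of weak equivalence from Lemma~\ref{satrel} to run Lemma~\ref{dist1} in the reverse direction---is precisely the intended deduction.
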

\begin{proof}[Proof of Theorem \ref{derivedmorita}]
The equivalence of the first two statements follows from Lemma \ref{triangequiv}. The third statement is implied by either (and hence both) of these statements. It remains to prove that the third statement implies one of the first two. To prove the equivalence of all three statements, we will first show that we can encode the information of all the distinguished triangles in $\mathbf{X}(A)$ in $\Omega$. Then we will show that it is possible to recover all distinguished triangles in $\mathbf{X}(A^\prime)$ from the map $\mathrm{h}F$ and the set $\Omega$. By hypothesis, $\Omega$ the largest possible set of distinguished triangles in $\mathrm{h}\cJ(\modoa)_n$ such that if $\Gamma_\alpha$ and $\Gamma_\beta$ are in $\Omega$ then $\Gamma_\beta$ cannot be obtained from $\Gamma_\alpha$ (or vice versa) by application of the shift functor or changing the signs of maps. Therefore all distinguished triangles in $\mathbf{X}(A)$ can be functorially recovered from the triangles in $\Omega$ by applying Corollary \ref{dist2} and Lemma \ref{dist1} $\beta$ times for some infinite cardinal $\beta$. This completes the first part of the proof.
Let $X\to Y\to Z\to \Sigma X$ be an arbitrary distinguished triangle, denoted $\Gamma_\gamma$, in $\Omega$. The map $\mathrm{h}F$ takes $\Gamma_\gamma$ to a distinguished triangle in $\mathbf{X}(A^\prime)$. If $\mathrm{h}F(\Omega)$ admits a bijection to the set of distinguished triangles in $\mathbf{X}(A^\prime)$, then the proof is completed. Otherwise suppose that $\mathbf{X}(A^\prime)$ has $\kappa$ distinguished triangles for some cardinal $\kappa$ and $\Omega$ has $\gamma$ distinguished triangles, and choose an infinite cardinal $\alpha>\kappa-\gamma$. Applying Corollary \ref{dist2} and Lemma \ref{dist1} $\alpha$ times to the distinguished triangles in $\mathrm{h}F(\Omega)$ in $\mathbf{X}(A^\prime)$ yields a transfinite sequence of distinguished triangles in $\mathbf{X}(A^\prime)$ indexed by the ordinals $\beta\leq\alpha$. This contains the set of distinguished triangles in $\mathbf{X}(A^\prime)$. Since $\alpha>\kappa-\gamma$, we may functorially recover each of the distinguished triangles in $\mathbf{X}(A^\prime)$ from $\Omega$, thereby completing the second part of the proof. $\mathrm{h}F$ commutes with the shift functor, so it preserves weak equivalences, and the proof is completed.
\end{proof}
\section{Conclusions and open problems.}
\subsection{Conclusions.}
We have seen that $\cJ$-theory is an inherently stable invariant. More precisely, the $\cJ$-theory of a stable $(\infty,1)$-category is a stable $(\infty,1)$-category. In addition, $\cJ$-theory enjoys many of the important properties which ordinary K-theory satisfies. It is a homotopy coherent version of derived Morita theory.
\subsection{Open problems.}
The following problems remain unsolved:
\begin{itemize}
\item What is $\cJ(\mathrm{Sp})_n$? Using Theorem \ref{maincalc}, we suspect that this problem will be very hard to solve since the computation of $K(\mathbb{S})$ is itself very hard.
\item Suppose $A$ is a $\mathbf{E}_n$-algebra object of a symmetric monoidal stable $\infty$-category $\cc^\otimes$. Then $\mathbf{Mod}^{\mathbf{E}_n}_A(\cc)$ is a $\mathbf{E}_{n-1}$-monoidal stable $\infty$-category. Is the $\cJ$-theory $\cJ(\mathbf{Mod}^{\mathbf{E}_n}_A(\cc))_n$ also $\mathbf{E}_{n-1}$-monoidal? Extrapolating from \cite{barwickmult} suggests that it is $\mathbf{E}_{n-2}$-monoidal.
\item Is it possible to choose $\Omega$ to be a smaller set of distinguished triangles in the proof of Theorem \ref{derivedmorita}?
\item Theorem \ref{derivedmorita} depicts $\cJ$-theory as a homotopy coherent generalization of derived Morita theory when one of the two rings in question is flat. Is there a more general homotopy coherent generalization of derived Morita theory? If so, how does it relate to $\cJ$-theory? 
\end{itemize}
\appendix
\section{Exact and Waldhausen $\infty$-categories}
We will define the basic objects of study in this paper (exact $\infty$-categories and $\cJ$-theory) in the current and next sections. The definition of an exact $\infty$-category arises from the more general notion of a Waldhausen $\infty$-category:
\begin{definition}
Let $\cc$ be a pointed $\infty$-category and $\cd$ a subcategory of $\cc$ containing all objects of $\cc$. The pair $(\cc,\cd)$ is called a Waldhausen $\infty$-category if for any object $X$ of $\cc$, the map $0\to X$ is in $\cd$, pushouts of maps in $\cd$ exist, and pushouts of maps in $\cd$ are in $\cd$. $(\cc,\cd)$ is a coWaldhausen $\infty$-category if $(\cc^\mathrm{op},\cd^\mathrm{op})$ is a Waldhausen $\infty$-category.
\end{definition}
One can realize any Waldhausen $\infty$-category as a coWaldhausen $\infty$-category. To see this, consider the $\infty$-category $\mathrm{Wald}_{\infty}$ of Waldhausen $\infty$-categories and the $\infty$-category $\mathrm{coWald}_{\infty}$ of coWaldhausen $\infty$-categories. These are subcategories of the $\infty$-category $\mathrm{Pair}_{\infty}$ of pairs of $\infty$-categories. One can restrict the opposite involution on $\mathrm{Pair}_{\infty}$ (which is an equivalence of $\infty$-categories) to $\mathrm{Wald}_{\infty}$ to get the required equivalence between $\mathrm{Wald}_{\infty}$ and $\mathrm{coWald}_{\infty}$.
A triple $(\cc,\cd,\ce)$ of pointed $\infty$-categories such that $(\cc,\cd)$ is a Waldhausen $\infty$-category and $(\cc,\ce)$ is a coWaldhausen $\infty$-category is called a biWaldhausen $\infty$-category. We will generally not write $(\cc,\cd,\ce)$ for a biWaldhausen $\infty$-category to save space. Using these definitions, Barwick formulated the notion of an exact $\infty$-category.
We first need a definition. A pullback/pushout square $X^\prime\times_{Y^\prime}Y$ in a biWaldhausen $\infty$-category $\cc$ is \textit{ambigressive} if the map $X^\prime\to Y^\prime$ is in $\cd$ and the map $Y\to Y^\prime$ is in $\ce$.
\begin{definition}
A biWaldhausen $\infty$-category $(\cc,\cd,\ce)$ is an \textit{exact $\infty$-category} if $\cc$ is stable and ambigressive pullbacks agree with ambigressive pushouts.
\end{definition}
Exact $\infty$-categories arrange themselves into an $\infty$-category $\mathrm{Exact}_{\infty}$ of exact $\infty$-categories. This is a simplicial subset of the simplicial set $\mathrm{Wald}_{\infty}\cap\mathrm{coWald}_{\infty}$. Just like we did for Waldhausen $\infty$-categories, we generally do not write $(\cc,\cd,\ce)$ for an exact $\infty$-category to save space. We can generate many examples of exact $\infty$-categories.
First, we note that the nerve of an exact category $\cc$ is an exact $\infty$-category.
Proving this reduces to a choice of the $\infty$-categories $\cd$ and $\ce$, so let $\cd$ be the collection of admissible cofibrations and $\ce$ the collection of admissible fibrations. Then the nerve $\mathrm{N}(\cc)$ of the exact category $\cc$ satisfies the conditions of the definition of an exact $\infty$-category.
Next, if $\cc$ is a stable $\infty$-category, then $\cc$ is an exact $\infty$-category.
To see this, let $\cd=\ce=\cc$. Then ambigressive pullbacks (resp. pushouts) are simply the pullbacks (resp. pushouts) in $\cc$. In stable $\infty$-categories pullbacks agree with pushouts, so $\cc$ is (by definition) an exact $\infty$-category. This exact structure is called the \textit{canonical exact structure} on $\cc$.
Let $\cc^\otimes$ be a symmetric monoidal stable $\infty$-category, and let $\co^\otimes$ be a coherent $\infty$-operad. Let $A$ be an $\co$-algebra object of $\cc^\otimes$. Then the underlying $\infty$-category $\modoa$ of the $\infty$-operad $\modoa^\otimes$ of $\co$-modules over $A$ is a stable $\infty$-category (one observes that $\mathrm{Sp}(\mathbf{Alg}_{/\co}(\cc)_{A/})$ is equivalent to $\modoa$. Since the $\infty$-category of spectrum objects of any $\infty$-category is stable this implies that $\modoa$ is stable). In this paper, we canonically equip $\modoa$ with the canonical exact structure.
\addcontentsline{toc}{section}{References}

\end{document}